\newtheorem{theorem}{Theorem}[section]
\newtheorem{conjecture}[theorem]{Conjecture}
\newtheorem{definition}[theorem]{Definition}
\newtheorem{lemma}[theorem]{Lemma}
\theoremstyle{remark}
\newtheorem{remark}[theorem]{Remark}
\numberwithin{equation}{section}
\begin{document}

\title[Strongly Damped Wave Equations with Dynamic Boundary Conditions]{Global existence of weak solutions for strongly damped wave equations with nonlinear boundary conditions and balanced potentials}

\author[J. L. Shomberg]{Joseph L. Shomberg}

\subjclass[2010]{Primary: 35L71, 35L20; Secondary: 35Q74, 74H40.}

\keywords{Nonlinear hyperbolic dynamic boundary condition, semilinear strongly damped wave equation, balance condition, global existence, weak solution.}

\address{Department of Mathematics and Computer Science, Providence College, Providence, Rhode Island 02918, USA, \\ {\tt{jshomber@providence.edu}
}}
%\email{jshomber@providence.edu}

\date{\today}

\begin{abstract}
We demonstrate the global existence of weak solutions to a class of semilinear strongly damped wave equations possessing nonlinear hyperbolic dynamic boundary conditions. 
Our work assumes $(-\Delta_W)^\theta \partial_tu$ with $\theta\in[\frac{1}{2},1)$ and where $\Delta_W$ is the Wentzell-Laplacian.
Hence, the associated linear operator admits a compact resolvent.
A balance condition is assumed to hold between the nonlinearity defined on the interior of the domain and the nonlinearity on the boundary.
This allows for arbitrary (supercritical) polynomial growth on each potential, as well as mixed dissipative/anti-dissipative behavior.
Moreover, the nonlinear function defined on the interior of the domain is assumed to be only $C^0$.
\end{abstract}

\maketitle

\tableofcontents

%%%%%%%%%%%%%%%%%%%%%%%%%%%%%%%%%%%%%%%%%%%%%%%%%%%%%%%%%%%%%%%%%%%
%%%%%%%%%%%%%%%%%%%%%%%%%%%%%%%%%%%%%%%%%%%%%%%%%%%%%%%%%%%%%%%%%%%
\section{Introduction}
%%%%%%%%%%%%%%%%%%%%%%%%%%%%%%%%%%%%%%%%%%%%%%%%%%%%%%%%%%%%%%%%%%%
%%%%%%%%%%%%%%%%%%%%%%%%%%%%%%%%%%%%%%%%%%%%%%%%%%%%%%%%%%%%%%%%%%%

Our aim in this article is to show the global existence of global weak solutions to the fractional strongly damped wave equation with nonlinear hyperbolic dynamic boundary conditions.
We establish the global existence of weak solutions under a balance condition imposed on the nonlinear terms.
This condition is motivated by \cite[Lemma 3.1]{RBT01}.
In the present article, both nonlinearities are allowed supercritical polynomial growth.
Special attention is given to obtaining the compact resolvent for the associated linear operator which contains (fractional) Wentzell-Laplacians.

Let $\Omega$ be a bounded domain in $\mathbb{R}^3$ with smooth boundary $\Gamma:=\partial\Omega$.
Throughout we assume $\theta\in[\frac{1}{2},1)$, $\omega\in(0,1]$ and $\alpha\in(0,1]$.
We consider the equations in the unknown $u=u(t,x)$,
\begin{align}
\partial^2_tu-\omega\Delta^\theta \partial_tu+\partial_tu-\Delta u+u+f(u)=0 & \quad \text{in} \quad (0,\infty)\times\Omega,  \label{pde1} \\ 
%\partial_{\bf n}u=-u+\Delta_\Gamma u\quad & \quad \text{on} \quad (0,\infty)\times\Gamma,  \label{bc1} \\
%\omega\partial_{\bf n}^\theta \partial_tu=-\partial_tu+\alpha\omega\Delta_\Gamma \partial_tu -g(u) & \quad \text{on} \quad (0,\infty)\times\Gamma,  \label{bc2}
%\end{align}
%hence, 
%\begin{align}
\partial^2_tu+\omega\partial_{\bf n}^\theta \partial_tu+\partial_{\bf n}u-\alpha\omega\Delta_\Gamma \partial_tu+\partial_tu-\Delta_\Gamma u+u+g(u)=0 & \quad \text{on} \quad (0,\infty)\times\Gamma.  \label{pde2}
\end{align}
Additionally, we impose the initial conditions
\begin{align}
u(0,x)=u_0(x) \quad \text{and} \quad \partial_tu(0,x)=u_1(x) \quad \text{at} \quad \{0\}\times\Omega,  \label{ic1}
\end{align}
and
\begin{align}
u_{\mid\Gamma}(0,x)=\gamma_{0}(x) \quad \text{and} \quad \partial_tu_{\mid\Gamma}(0,x)=\gamma_{1}(x) \quad \text{at}\ \{0\}\times\Gamma.  \label{ic2}
\end{align}
Above, $\Delta_\Gamma$ denotes the Laplace-Beltrami operator (cf. e.g. \cite{Ch-La-10}).

We assume $f\in C(\mathbb{R})$ and $g\in C^1(\mathbb{R})$ satisfy the sign conditions
\begin{equation}
\liminf_{|s|\rightarrow\infty}\frac{f(s)}{s}> -M_1, \quad g'(s)\ge -M_2, \quad \forall s\in \mathbb{R},\label{sign}
\end{equation}
for some $M_1,M_2>0$, and the growth assumptions, for all $s\in \mathbb{R}$, 
\begin{equation}
|f(s)|\le \ell_1(1+|s|^{r_1-1}), \quad |g(s)|\le \ell_2(1+|s|^{r_2-1}),  \label{growth}
\end{equation}
for some positive constants $\ell_1$ and $\ell_2$, and where $r_1,r_2\ge 2$. 
%Let 
%\begin{equation}
%\widetilde{g}(s) := g(s) -\nu \beta s, \quad \text{for} s\in \mathbb{R}. \label{tilda-f}
%\end{equation}
In addition, we assume there exists $\varepsilon \in (0,\omega )$ so that
the following balance condition holds,
\begin{equation}
\liminf_{|s|\rightarrow \infty} \frac{f(s)s+\frac{|\Gamma|}{|\Omega|}
g(s)s - \frac{C_\Omega^2|\Gamma|^2}{4\varepsilon|\Omega
|^2}|g'(s)s + g(s)|^2}{|s|^{r_1}} > 0,  \label{balance}
\end{equation}
for $r_1\ge \max \{r_2,2(r_2-1)\}$, where $C_\Omega>0$ is the best Sobolev constant in the following Sobolev-Poincar\'{e} inequality
\begin{equation}
\|u-\langle u\rangle_\Gamma\|_{L^2(\Omega)}\le C_\Omega\|\nabla u\|
_{L^2(\Omega)}, \quad \langle u\rangle_\Gamma := \frac{1}{|\Gamma|}\int\limits_\Gamma tr_D(u) d\sigma,  \label{eq:Poincare}
\end{equation}
for all $u\in H^1(\Omega)$.

Let us provide further context for the balance condition (\ref{balance}) in our setting (also see \cite{RBT01} and \cite{Gal-Shomberg15-2} for other settings).
Suppose that for $|y|\rightarrow\infty,$ both the internal and boundary functions satisfy the following:
\begin{equation*}
\lim_{|y|\rightarrow \infty }\frac{f(y)}{|y|^{r_1-1}}=\left( r_1-1\right) c_f, \quad \lim_{|y|\rightarrow \infty }\frac{g'(y)}{|y|^{r_2-2}}=\left( r_2-1\right) c_g,  
\end{equation*}
for some constants $c_f,c_g\in \mathbb{R}\setminus \{0\}$. 
In particular, there holds
\begin{equation*}
f(y)y\sim c_f|y|^{r_1}, \quad g(y)y\sim c_g|y|^{r_2} \quad \text{as}\ |y|\rightarrow \infty .
\end{equation*}
For the case of bulk dissipation (i.e., $c_f>0$) and anti-dissipative behavior at the boundary $\Gamma $ (i.e., $c_g<0$), assumption (\ref{balance}) is automatically satisfied provided that $r_1>\max \{r_2,2(r_2-1)\}$. 
Furthermore, if $2<r_2<2\left( r_2-1\right)=r_1$ and
\begin{equation*}
c_f>\frac{1}{4\varepsilon}\left( \frac{C_\Omega|\Gamma|c_gr_2}{|\Omega|}\right)^2, 
\end{equation*}
for some $\varepsilon \in (0,\omega )$, then (\ref{balance}) is again satisfied. 
In the case when $f$ and $g$ are sublinear (i.e., $r_1=r_2=2$ in (\ref{growth})), the condition (\ref{balance}) is also automatically satisfied provided that
\begin{equation*}
\left( c_f +\frac{|\Gamma|}{|\Omega|}c_g\right) >\frac{1}{\varepsilon}\left( \frac{C_\Omega |\Gamma| c_g}{|\Omega|}\right)^2 
\end{equation*}
for some $\varepsilon \in \left( 0,\omega \right)$. 

\textbf{Notation and conventions.} Let us introduce some notation and conventions that are used throughout the article. 
Norms in the associated space are clearly denoted $\|\cdot\|_B$ where $B$ is the corresponding Banach space. 
We use the notation $(\cdot,\cdot)_H$ to denote the inner-product on the Hilbert space $H$.
The dual product on $H^*\times H$ is denoted $\langle \cdot,\cdot \rangle_{H^*\times H}$.
The notation $\langle \cdot,\cdot \rangle$ is also used to denote the product on the phase space and various other vectorial function spaces.
Denote by $(u,v)^{tr}$ the vector-valued function $\binom{u}{v}.$ 
In many calculations, functional notation indicating dependence on the variable $t$ is dropped; for example, we will write $u$ in place of $u(t)$. 
Throughout the article, $C>0$ will denote a \emph{generic} constant which may depend on various structural parameters such as $|\Omega|$, $|\Gamma|$, $M_1,$ $M_2$, etc, and these constants may even change from line to line.
Furthermore, $Q:\mathbb{R}_{+}\rightarrow \mathbb{R}_{+}$ will be a generic monotonically increasing function whose specific dependance on other parameters will be made explicit on occurrence.
All of these constants/quantities are {\em independent} of the perturbation parameters $\theta,$ $\alpha$ and $\omega.$

\textbf{Outline of the article.} In the next section we establish the variational formulation of Problem {\textbf{P} and define weak solutions.
A proof of the existence of global weak solutions is developed in Section \ref{s:wp}.
Because of the nature of the balance condition, a continuous dependence type estimate is not available.
The article continues with some remarks on this difficulty and plans for possible further research. 
An appendix contains some explicit characterizations for the fractional Wentzell-Laplacian used throughout the article, as well as a certain compact embedding result that we need to draw upon.

%%%%%%%%%%%%%%%%%%%%%%%%%%%%%%%%%%%%%%%%%%%%%%%%%%%%%%%%%%%%%%%%%%%
%%%%%%%%%%%%%%%%%%%%%%%%%%%%%%%%%%%%%%%%%%%%%%%%%%%%%%%%%%%%%%%%%%%
\section{Formulation of the model problem}  \label{s:prelim}
%%%%%%%%%%%%%%%%%%%%%%%%%%%%%%%%%%%%%%%%%%%%%%%%%%%%%%%%%%%%%%%%%%%
%%%%%%%%%%%%%%%%%%%%%%%%%%%%%%%%%%%%%%%%%%%%%%%%%%%%%%%%%%%%%%%%%%%

In this section we first recall the Wentzell-Laplacian defined on vectorial Hilbert spaces.
(For this we largely refer to \cite[Section 2]{AreMetPalRom-2003} and \cite[Section 2 and Appendix]{Gal12-1}.)
Following this, we give the basic functional setup in order to formulate the model problem.
We also provide various results pertaining to the problem.

To begin, let $\lambda_\Omega>0$ denote the best constant satisfying the Sobolev inequality in $\Omega$
\begin{equation}  \label{Sobolev}
\lambda_\Omega \int_\Omega u^2 dx \le \int_\Omega (|\nabla u|^2+u^2) dx.
\end{equation}
We will also rely on the Laplace-Beltrami operator $-\Delta_\Gamma$ on the surface $\Gamma.$ 
This operator is positive definite and self-adjoint on $L^2(\Gamma)$ with domain $D(\Delta_\Gamma)$.
The Sobolev spaces $H^s(\Gamma)$, for $s\in\mathbb{R}$, may be defined as $H^s(\Gamma)=D((\Delta_\Gamma)^{s/2})$ when endowed with the norm whose square is given by, for all $u\in H^s(\Gamma)$,
\begin{equation}  \label{LB-norm}
\|u\|^2_{H^s(\Gamma)} := \|u\|^2_{L^2(\Gamma)} + \left\|(-\Delta_\Gamma)^{s/2}u\right\|^2_{L^2(\Gamma)}.
\end{equation}
On the boundary, let $\lambda_\Gamma>0$ denote the best constant satisfying the Sobolev inequality on $\Gamma$
\begin{equation}  \label{bndry-Sobolev}
\lambda_\Gamma \int_\Gamma u^2 d\sigma \le \int_\Gamma\left( |\nabla_\Gamma u|^2+u^2 \right) d\sigma.
\end{equation}

Next, recall that $\Omega$ is a bounded domain of $\mathbb{R}^3$ with boundary $\Gamma$, to which we now assume is of class $\mathcal{C}^2$. 
To this end, consider the space $\mathbb{X}^2 = L^2(\overline{\Omega},d\mu),$ where $d\mu =dx_{\mid\Omega}\oplus d\sigma$ is such that $dx$ denotes the Lebesgue measure on $\Omega$ and $d\sigma$ denotes the natural surface measure on $\Gamma$. 
Then $\mathbb{X}^2=L^2(\Omega,dx)\oplus L^2(\Gamma,d\sigma)$ may be identified by the natural norm
\begin{equation*}
\|u\|_{\mathbb{X}^2}^2=\int_\Omega|u(x)|^2dx + \int_\Gamma
|u(x)|^2d\sigma.
\end{equation*}
Moreover, if we identify every $u\in C(\overline{\Omega})$ with $U=(u_{\mid\Omega},u_{\mid\Gamma})^{tr} \in C(\Omega) \times C(\Gamma)$, we may also define $\mathbb{X}^2$ to be the completion of $C(\overline{\Omega})$ with respect to the norm $\|\cdot\|_{\mathbb{X}^2}$. 
Thus, in general, any function $u\in \mathbb{X}^2$ will be of the form $u=\binom{u_1}{u_2}$ with $u_1\in L^2(\Omega,dx)$ and $u_2\in L^2(\Gamma,d\sigma)$.
It is important to note that there need not be any connection between $u_1$ and $u_2$. 
From now on, the inner product in the Hilbert space $\mathbb{X}^2$
will be denoted by $\langle \cdot ,\cdot \rangle_{\mathbb{X}^2}.$ 
Now we recall that the Dirichlet trace map $tr_D:C^\infty(\overline{\Omega}) \rightarrow C^\infty(\Gamma),$ defined by $tr_D(u) =u_{\mid\Gamma}$ extends to a linear continuous operator $tr_D:H^r(\Omega) \rightarrow H^{r-1/2}(\Gamma),$
for all $r>1/2$, which is onto for $1/2<r<3/2.$ 
This map also possesses a bounded right inverse $tr_D^{-1}:H^{r-1/2}(\Gamma)
\rightarrow H^r(\Omega)$ such that $tr_D(tr_D^{-1}\psi) =\psi,$ for any $\psi \in H^{r-1/2}(\Gamma)$. 
We can thus introduce the subspaces of $H^r(\Omega) \times H^{r-1/2}(\Gamma)$ and $H^r(\Omega) \times H^r(\Gamma)$, respectively, by
\begin{align}
\mathbb{V}_0^r &:= \{U=(u,\gamma) \in H^r(\Omega) \times H^{r-1/2}(\Gamma) : tr_D(u) =\gamma \},  \label{vvv} \\
\mathbb{V}^r &:= \{U=(u,\gamma) \in \mathbb{V}_0^r: tr_D(u) = \gamma \in H^r(\Gamma) \},  \notag
\end{align}
for every $r>1/2,$ and note that $\mathbb{V}_0^r,$ $\mathbb{V}^r$ are
not product spaces. 
However, we do have the following dense and compact embeddings $\mathbb{V}_0^{r_1}\subset \mathbb{V}_0^{r_2},$ for any $r_1>r_2>1/2$ (by definition, this also true for the sequence of spaces $\mathbb{V}^{r_1}\subset \mathbb{V}^{r_2}$). Naturally, the norm on the spaces $\mathbb{V}_0^r,$ $\mathbb{V}^r$ are defined by
\begin{equation}
\|U\|_{\mathbb{V}_0^r}^2:=\|u\|_{H^r(\Omega)}^2+\|\gamma\|_{H^{r-1/2}(\Gamma)}^2, \quad \|U\|_{\mathbb{V}^r}^2:=\|u\|_{H^r(\Omega)}^2+\|\gamma\|_{H^r(\Gamma)}^2.
\label{Vr-norm}
\end{equation}

Here we consider the basic (linear) operator associated with the model problem \eqref{pde1}-\eqref{ic2}, the so-called Wentzell-Laplacian.
Let
\begin{align}
\Delta_W \binom{u_1}{u_2}:=\begin{pmatrix} \Delta u_1-u_1 \\ -\partial_{\bf n}u_1+\Delta_\Gamma u_2-u_2 \end{pmatrix},  \label{A_Wentzell1}
\end{align}
with
\begin{equation}
D(\Delta_W) := \left\{ U=\binom{u_{1}}{u_{2}}\in \mathbb{V}^1 : -\Delta u_1\in L^2(\Omega),\ \partial_{\bf n}u_1-\Delta_\Gamma u_2\in L^2(\Gamma) \right\}. \label{A_Wentzell}
\end{equation}
By, for example, \cite[see Appendix and in particular Theorem 5.3]{Gal12-1}, the operator $(\Delta_W,D(\Delta_W))$ is self-adjoint and strictly positive operator on $\mathbb{X}^2$, and the resolvent operator $(I+\Delta_W)^{-1}\in \mathcal{L}(\mathbb{X}^2)$ is compact. 
Since $\Gamma$ is of class $\mathcal{C}^2,$ then $D(\Delta_W)=\mathbb{V}^2$. Indeed, the map $L:U\mapsto \Delta_WU,$ as a mapping from $\mathbb{V}^2$ into $\mathbb{X}^2=L^2(\Omega) \times L^2(\Gamma),$ is an isomorphism, and there exists a
positive constant $C_*$, independent of $U=(u,\gamma)^{tr}$, such that, for all $U\in \mathbb{V}^2$, 
\begin{equation}
C_*^{-1}\|U\|_{\mathbb{V}^2}\le \|L(U)\|_{\mathbb{X}^2}\le C_*\|U\|_{\mathbb{V}^2},  \label{regularity-oper}
\end{equation}
(cf. Lemma \ref{t:pre-reg-op}, see also \cite{CFGGGOR09}).

The following basic elliptic estimate is taken from \cite[Lemma 2.2]
{Gal&Grasselli08}.

\begin{lemma}  \label{t:pre-reg-op} 
Consider the linear boundary value problem, 
\begin{equation}
\left\{ \begin{array}{rl} -\Delta u & =p_1 \quad \text{in} \quad \Omega, \\ 
-\Delta_\Gamma u + \partial_{\bf n}u + u & =p_2 \quad \text{on} \quad \Gamma.
\end{array}\right.  \label{pre-reg-BVP}
\end{equation}
If $(p_1,p_2) \in H^s(\Omega)\times H^s(\Gamma)$ for $s\ge0$ and $s+\frac{1}{2}\not\in \mathbb{N}$, then the following estimate holds for some constant $C>0$, 
\begin{equation}
\|u\|_{H^{s+2}(\Omega)}+\|u\|_{H^{s+2}(\Gamma)}\le C\left( \|p_1\|_{H^s(\Omega)}+\|p_2\|_{H^s(\Gamma)}\right).  \label{H2-reg}
\end{equation}
\end{lemma}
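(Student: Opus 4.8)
The plan is to first produce a weak solution in $\mathbb{V}^1$ by a variational argument, and then to bootstrap its regularity by alternating elliptic estimates in the bulk $\Omega$ and on the closed manifold $\Gamma$. For the variational step I would test the interior equation $-\Delta u = p_1$ against $\phi$ with trace $\psi$, integrate by parts, and use the boundary equation to eliminate the normal derivative $\partial_{\bf n}u$ on $\Gamma$. This produces the symmetric bilinear form
\begin{equation*}
a(U,\Phi) = \int_\Omega \nabla u \cdot \nabla \phi \, dx + \int_\Gamma \nabla_\Gamma \gamma \cdot \nabla_\Gamma \psi \, d\sigma + \int_\Gamma \gamma \psi \, d\sigma, \qquad U=(u,\gamma),\ \Phi=(\phi,\psi)\in\mathbb{V}^1,
\end{equation*}
against the load $\langle p_1,\phi\rangle + \langle p_2,\psi\rangle$. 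Coercivity of $a$ on $\mathbb{V}^1$ follows from the Sobolev--Poincar\'e inequality \eqref{eq:Poincare}, which controls $\|u\|_{L^2(\Omega)}$ by $\|\nabla u\|_{L^2(\Omega)}$ and $\|\gamma\|_{L^2(\Gamma)}$ (since $\langle u\rangle_\Gamma$ is dominated by $\|\gamma\|_{L^2(\Gamma)}$), so that Lax--Milgram yields a unique $U\in\mathbb{V}^1$ with $\|U\|_{\mathbb{V}^1}\le C(\|p_1\|_{L^2(\Omega)}+\|p_2\|_{L^2(\Gamma)})$. This settles solvability and anchors the base level of the estimate.

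Next I would run the regularity bootstrap, beginning with $s=0$. Since $\Delta u = -p_1\in L^2(\Omega)$ and $u\in H^1(\Omega)$, the generalized Neumann trace $\partial_{\bf n}u$ is well defined in $H^{-1/2}(\Gamma)$. Reading the boundary equation as $-\Delta_\Gamma\gamma+\gamma = p_2-\partial_{\bf n}u$ on the closed surface $\Gamma$ and invoking elliptic regularity for $-\Delta_\Gamma+1$ (an isomorphism $H^{t+2}(\Gamma)\to H^t(\Gamma)$) gives $\gamma\in H^{3/2}(\Gamma)$. Using this improved trace as Dirichlet data for $-\Delta u=p_1$ in $\Omega$ and applying interior elliptic regularity gives $u\in H^2(\Omega)$, whence $\partial_{\bf n}u\in H^{1/2}(\Gamma)$; re-inserting into the boundary equation now yields $\gamma\in H^2(\Gamma)$, which is exactly the $s=0$ conclusion. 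For general $s$ I would iterate this ping-pong: at each pass the Neumann trace theorem, the Laplace--Beltrami regularity on $\Gamma$, and the interior Dirichlet regularity each raise the smoothness by a fixed amount until the limiting regularity imposed by $(p_1,p_2)\in H^s(\Omega)\times H^s(\Gamma)$ is attained after finitely many steps, giving $u\in H^{s+2}(\Omega)$ and $\gamma\in H^{s+2}(\Gamma)$. Because every step is a continuous linear estimate, composing them produces the constant $C$ in \eqref{H2-reg}.

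The main obstacle is the coupling through the normal derivative: $\partial_{\bf n}u$ binds the interior and boundary equations together, so neither can be solved in isolation, and one must track the exact order of the intermediate fractional trace spaces to guarantee that each half-step genuinely gains regularity and that the loop closes. This is precisely where the hypothesis $s+\tfrac{1}{2}\notin\mathbb{N}$ enters: it keeps all intermediate Sobolev indices on $\Gamma$ away from the critical half-integer exponents, at which the trace map $tr_D$ and the Neumann-trace map $\partial_{\bf n}$ lose the clean mapping properties used above. Away from these values the trace theory is exact, the bootstrap is unobstructed, and the a priori bound follows by chaining the constants from each elliptic and trace estimate.
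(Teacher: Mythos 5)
Your proposal is essentially correct, but there is no internal proof to compare it against: the paper does not prove Lemma \ref{t:pre-reg-op} at all, it imports the statement verbatim by citing \cite[Lemma 2.2]{Gal&Grasselli08}. What you have written is a self-contained reconstruction of the standard argument behind that cited result, and it holds together. The weak formulation you derive by eliminating $\partial_{\bf n}u$ through the boundary equation is exactly the Wentzell bilinear form underlying $\Delta_W$ from Section \ref{s:prelim}; coercivity on $\mathbb{V}^1$ via the Sobolev--Poincar\'e inequality \eqref{eq:Poincare} works as you say, since $\langle u\rangle_\Gamma$ is dominated by $\|\gamma\|_{L^2(\Gamma)}$; and the Dirichlet/Laplace--Beltrami ping-pong closes after finitely many half-unit gains, provided you first justify (as you implicitly do) that the Lax--Milgram solution satisfies the boundary equation distributionally on $\Gamma$ --- this follows by testing with pairs $(tr_D^{-1}\psi,\psi)$ for arbitrary $\psi\in H^1(\Gamma)$ after Green's formula defines $\partial_{\bf n}u\in H^{-1/2}(\Gamma)$. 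Your route buys the reader a proof; the paper's route buys brevity at the cost of opacity. Two caveats if this were written out in full. First, for large $s$ the bootstrap needs $\Gamma$ smoother than the class $\mathcal{C}^2$ assumed in Section \ref{s:prelim} (the standing smoothness assumption on $\Gamma$ covers this), and the constant $C$ in \eqref{H2-reg} then depends on $s$. Second, your explanation of the hypothesis $s+\frac{1}{2}\notin\mathbb{N}$ is only heuristic and arguably misattributed: the trace theorems $H^r(\Omega)\rightarrow H^{r-1/2}(\Gamma)$ and the Neumann-trace estimates you invoke hold at every index your bootstrap actually visits, with no exceptional values, so your argument as written never visibly uses the restriction. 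The exclusion of half-integers typically enters through Lions--Magenes-type characterizations of fractional-order spaces and domains of fractional powers (the exceptional $H^{1/2}_{00}$-type spaces, cf. \cite{Lions69}), e.g.\ if one proves integer cases and interpolates; pinning down precisely where your chain of estimates needs it --- or acknowledging that your method may in fact not need it --- is the one loose thread in an otherwise sound argument.
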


We also recall the following basic inequality which gives interior control over some boundary terms (cf. \cite[Lemma A.2]{Gal12-2}).

\begin{lemma}  \label{t:pre-bnd-reg} 
Let $s>1$ and $u\in H^1(\Omega)$. 
Then, for every $\varepsilon>0$, there exists a positive constant $C_\varepsilon\sim\varepsilon^{-1}$ such that, 
\begin{equation}
\|u\|_{L^s(\Gamma)}^s \le \varepsilon\|\nabla u\|_{L^2(\Omega)}^2 + C_\varepsilon(\|u\|_{L^\gamma(\Omega)}^\gamma + 1),  \label{pre-bnd-reg}
\end{equation}
where $\gamma=\max\{s,2(s-1)\}$.
\end{lemma}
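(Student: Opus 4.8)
The plan is to reduce the boundary integral $\int_\Gamma |u|^s\,d\sigma$ to interior integrals via the divergence theorem, and then to absorb the resulting gradient term using Young's inequality; the exponent $\gamma=\max\{s,2(s-1)\}$ will emerge naturally from exactly the two powers produced along the way. First I would fix a vector field $h\in C^1(\overline{\Omega};\mathbb{R}^3)$ whose restriction to $\Gamma$ satisfies $h\cdot\mathbf{n}\ge c_0>0$; such a field exists because $\Gamma$ is of class $\mathcal{C}^2$ (extend the outward unit normal inward through a tubular neighborhood, or use the signed distance function). Working first with $u\in C^1(\overline{\Omega})$ and applying the divergence theorem to $|u|^s h$, one obtains
\[
c_0\int_\Gamma |u|^s\,d\sigma \le \int_\Gamma |u|^s\,(h\cdot\mathbf{n})\,d\sigma = \int_\Omega |u|^s\operatorname{div}h\,dx + s\int_\Omega |u|^{s-1}\operatorname{sgn}(u)\,(h\cdot\nabla u)\,dx,
\]
where the chain rule for $|u|^s$ is legitimate since $s>1$ renders $t\mapsto|t|^s$ of class $C^1$.

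Next I would estimate the two interior integrals. The first is bounded by $C\|u\|_{L^s(\Omega)}^s$. For the second, Young's inequality with a parameter $\delta>0$ gives $|u|^{s-1}|\nabla u|\le \tfrac{\delta}{2}|\nabla u|^2 + \tfrac{1}{2\delta}|u|^{2(s-1)}$, so this term is controlled by $\tfrac{C\delta}{2}\|\nabla u\|_{L^2(\Omega)}^2 + \tfrac{C}{2\delta}\|u\|_{L^{2(s-1)}(\Omega)}^{2(s-1)}$. The two interior powers appearing are precisely $s$ and $2(s-1)$, whose maximum is $\gamma$. Since $\Omega$ is bounded we have $L^\gamma(\Omega)\hookrightarrow L^p(\Omega)$ for $p\le\gamma$, and the elementary bound $a^p\le 1+a^\gamma$ for $a\ge 0$, $p\le\gamma$, yields $\|u\|_{L^p(\Omega)}^p\le C(\|u\|_{L^\gamma(\Omega)}^\gamma+1)$ for $p\in\{s,2(s-1)\}$. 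Collecting terms, dividing by $c_0$, and finally choosing $\delta$ so that $C\delta/(2c_0)=\varepsilon$ produces the asserted inequality with $C_\varepsilon\sim\delta^{-1}\sim\varepsilon^{-1}$.

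It remains to remove the smoothness assumption on $u$ by density. If $\|u\|_{L^\gamma(\Omega)}=\infty$ the inequality is trivial, so I may assume $u\in H^1(\Omega)\cap L^\gamma(\Omega)$ and approximate it by $u_n\in C^1(\overline{\Omega})$ converging both in $H^1(\Omega)$ and in $L^\gamma(\Omega)$. Applying the already-established smooth inequality (in its pre-$\gamma$ form, keeping $\|\cdot\|_{L^s(\Omega)}^s$ and $\|\cdot\|_{L^{2(s-1)}(\Omega)}^{2(s-1)}$ separately) to the differences $u_n-u_m$ shows that $\|u_n-u_m\|_{L^s(\Gamma)}^s$ is dominated by interior quantities that tend to $0$, because both exponents $s,\,2(s-1)$ are $\le\gamma$ and the gradient term vanishes; hence $(u_n|_\Gamma)$ is Cauchy in $L^s(\Gamma)$ and its limit must coincide with the trace of $u$. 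Passing to the limit in the inequality for each $u_n$ completes the proof.

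The step requiring the most care—the genuine technical point rather than an obstacle—is this last density argument when $s$ is supercritical for the $H^1$ trace (in dimension three, $s>4$), since then $u|_\Gamma$ need not lie in $L^s(\Gamma)$ a priori; the self-referential application of the estimate to the differences $u_n-u_m$ is exactly what certifies that the traces converge in $L^s(\Gamma)$, and everything else in the argument is routine.
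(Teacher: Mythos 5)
Your proof is correct, and it is essentially the standard argument for this result: the paper itself does not prove the lemma but cites it from \cite[Lemma A.2]{Gal12-2}, whose proof is exactly your scheme---apply the divergence theorem to $|u|^s h$ for a transversal $C^1$ vector field $h$ with $h\cdot\mathbf{n}\ge c_0>0$ (equivalently, the $W^{1,1}(\Omega)\rightarrow L^1(\Gamma)$ trace inequality applied to $|u|^s$), then Young's inequality with parameter to split $|u|^{s-1}|\nabla u|$, which is precisely where the two exponents $s$ and $2(s-1)$, hence $\gamma=\max\{s,2(s-1)\}$, and the scaling $C_\varepsilon\sim\varepsilon^{-1}$ arise. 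Your handling of the remaining details (chain rule legitimacy for $s>1$, reduction of both interior exponents to $\gamma$ via $a^p\le 1+a^\gamma$ on the bounded domain, and the final density step in $H^1(\Omega)\cap L^\gamma(\Omega)$ with the self-referential Cauchy argument certifying that the traces converge in $L^s(\Gamma)$ even when $s$ is supercritical) is also sound.
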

We refer the reader to more details to e.g., \cite{CGGM10}, \cite{CFGGGOR09} and \cite{Gal&Warma10} and the references therein. 

Finally, since the operator $\Delta_W$ with domain $D(\Delta_W)$ is positive and self-adjoint on $\mathbb{X}^2$, we may define fractional powers of $\Delta_W$ (see Appendix \ref{s:ap-1}).
Indeed, with $\theta\in[\frac{1}{2},1)$, $\alpha\in(0,1]$ and $\omega\in(0,1]$, we define 
\begin{align}
\Delta_W^\theta \binom{u_1}{u_2} := \begin{pmatrix} \Delta^\theta u_1-u_1 \\ -\partial_{\bf n}^\theta u_1+\Delta_\Gamma u_2-u_2 \end{pmatrix}  \notag
\end{align}
and
\begin{align}
\Delta_W^{\theta,\alpha,\omega} \binom{u_1}{u_2} := \begin{pmatrix} \omega\Delta^\theta u_1-u_1 \\ -\omega\partial_{\bf n}^\theta u_1+\alpha\omega\Delta_\Gamma u_2-u_2 \end{pmatrix}  \notag
\end{align}
with domain
\begin{equation}
D(\Delta_W^{\theta,\alpha,\omega}) := \left\{ U=\binom{u_{1}}{u_{2}}\in \mathbb{V}^1 : -\omega\Delta^\theta u_1\in L^2(\Omega),\ \omega\partial_{\bf n}^\theta u_1-\alpha\omega\Delta_\Gamma u_2\in L^2(\Gamma) \right\}. \label{A_Wentzell2}
\end{equation}
Hence, $\Delta_W^{\theta,1,1}=\Delta_W^\theta$.
The fractional flux $\partial_{\bf n}^\theta$ are defined as follows.
Consider $\partial_{\bf n}u=\nabla u\cdot {\mathbf n}$, and recall $\partial_{\bf n}u\in L^2(\Gamma)$ whenever $u\in H^{3/2}(\Omega)$.
So we can define $\partial_{\bf n}^\theta u=\nabla_W^{\theta/2} u \cdot {\mathbf n}$ when $u\in H^{\frac{1}{2}+\theta}(\Omega)$ guaranteeing the fractional flux $\partial_{\bf n}^\theta u\in L^2(\Gamma).$
(These fractional flux operators are explicitly written in Appendix \ref{s:ap-1}.)
Moving toward the linear operator associated with the model problem \eqref{pde1}-\eqref{ic2}
Let $U=(u_1,u_2)\in \mathbb{V}^1$ and $V=(v_1,v_2)\in \mathbb{X}^2$, and let $\mathcal{X}=(U,V)$.
Motivated by \cite{Carvalho_Cholewa_02}, we define the unbounded linear operator $\mathcal{A}_{\theta,\alpha,\omega}$ written as
\[
\mathcal{A}_{\theta,\alpha,\omega}\mathcal{X} := \begin{pmatrix} 0 & I_{2\times2} \\ \Delta_W & \Delta_W^{\theta,\alpha,\omega} \end{pmatrix}\begin{pmatrix} U \\ V \end{pmatrix} = \begin{pmatrix} V \\ \Delta_W U+\Delta_W^{\theta,\alpha,\omega} V \end{pmatrix} = \begin{pmatrix} V \\ \Delta^{\theta,1,1}_W(\Delta_W^{1-\theta,1,1}U+\Delta_W^{0,\alpha,\omega}V )\end{pmatrix}
\]
with domain
\begin{align}
D(\mathcal{A}_{\theta,\alpha,\omega}) := & \left\{ \mathcal{X}=\binom{U}{V}\in \mathbb{V}^1\times\mathbb{X}^2 : \Delta_W^{1-\theta,1,1}U+\Delta_W^{0,\alpha,\beta}V\in D(\Delta_W^{\theta,1,1}) \right\}.  \notag
\end{align}
By \cite[Theorem 3.1 (a)]{Haraux-Otani-13}, the resolvent $(I_{4\times4}+\mathcal{A}_{\theta,\alpha,\omega})^{-1}\in\mathcal{L}(\mathbb{V}^1\times\mathbb{X}^2)$ is compact.
Hence, we can support the local existence of weak solutions (defined below) with a Galerkin method.

Next we define the nonlinear mapping on $\mathbb{V}^1\times\mathbb{X}^2$ given by 
\begin{align}
F(U):=\binom{0}{-f(u)}, \quad G(U):=\binom{0}{-g(\gamma)}, 
\end{align}
and
\begin{align}
\mathcal{F}(\mathcal{X}) &:= \begin{pmatrix} F(U) \\ G(U) \end{pmatrix} = \begin{pmatrix} 0 \\ -f(u) \\ 0 \\ -g(\gamma) \end{pmatrix} \quad \text{for} \quad U\in\mathbb{V}^1.  \notag
\end{align}
Due to the two embeddings, $H^1(\Omega)\hookrightarrow L^{s_1}(\Omega)$, $s_1\in[1,6]$, and $H^1(\Gamma)\hookrightarrow L^{s_2}(\Omega)$, $s_2\in[1,\infty)$, one can show that when $r_1\in[1,3]$ in \eqref{growth}, then $\mathcal{F}:\mathbb{V}^1\times\mathbb{X}^2\rightarrow\mathbb{V}^1\times\mathbb{X}^2$ is locally Lipschitz (indeed, cf. e.g. \cite[Lemma 2.6]{Graber-Shomberg-16}).
With $r_1\ge1$ arbitrary, this motivates us to set 
\[
\widetilde{\mathbb{V}}^{s,r_1} = \left\{U=(u,\gamma)^{tr}\in \left[ H^s(\Omega)\cap L^{r_1}(\Omega)\right] \times H^s(\Gamma) : tr_D(u)=\gamma\right\}
\]
with the canonical norm whose square is given by
\[
\|U\|^2_{\widetilde{\mathbb{V}}^{s,r_1}}:=\|u\|^2_{H^s(\Omega)}+\|u\|^{r_1}_{L^{r_1}(\Omega)}+\|\gamma\|^2_{H^s(\Gamma)},
\]
and also set $\mathcal{H}_0:=\widetilde{\mathbb{V}}^{1,r_1}\times\mathbb{X}^2$.
The space $\mathcal{H}_0$ is Hilbert with the norm whose square is given by, for $\mathcal{X} = (U,V)\in\mathcal{H}_0$,
\begin{align}
\|\mathcal{X}\|^2_{\mathcal{H}_0} &:= \|U\|^2_{\widetilde{\mathbb{V}}^{1,r_1}}+\|V\|^2_{\mathbb{X}^2} \notag \\
& = \|u\|^2_{H^1(\Omega)}+\|u\|^{r_1}_{L^{r_1}(\Omega)} + \|v\|^2_{L^2(\Omega)} + \|\gamma\|^2_{H^1(\Gamma)} + \|\delta\|^2_{L^2(\Gamma)}   \notag \\ 
& = \left( \|\nabla u\|^2_{L^2(\Omega)} + \|u\|^2_{L^2(\Omega)} \right)+\|u\|^{r_1}_{L^{r_1}(\Omega)} + \|v\|^2_{L^2(\Omega)} + \left( \|\nabla_\Gamma\gamma\|^2_{L^2(\Gamma)} + \|\gamma\|^2_{L^2(\Gamma)} \right) + \|\delta\|^2_{L^2(\Gamma)}.   \notag
\end{align}
The space $\mathcal{H}_0$ is our weak energy phase space.
Moreover, given $\mathcal{X}_0=(U_0,U_1)\in\mathcal{H}_0=\widetilde{\mathbb{V}}^{1,r_1}\times\mathbb{X}^2,$ the abstract formulation of Problem {\textbf P} takes the form
\[
\left\{ \begin{array}{ll}
\displaystyle\frac{d}{dt}\mathcal{X}(t) = \mathcal{A}_{\theta,\alpha,\omega}\mathcal{X}(t)+\mathcal{F}(\mathcal{X}(t)) & t>0, \\ ~ \\
\mathcal{X}(0) = \mathcal{X}_0. \end{array} \right.
\]

We can now introduce the variational formulation of Problem {\textbf P}.

\begin{definition}  \label{d:ws}
Let $\theta\in[\frac{1}{2},1),$ $\alpha\in(0,1]$ and $\omega\in(0,1].$
Let $T>0$ and $\mathcal{X}_0=(U_0,U_1)\in\mathcal{H}_0.$
A function $\mathcal{X}(t)=(U(t),\partial_tU(t))=(u(t),u_{\mid\Gamma}(t),\partial_t u(t),\partial_t u_{\mid\Gamma}(t))$ satisfying  
\begin{align}
U & \in L^\infty(0,T;\mathbb{V}^1),  \label{wk-reg-1} \\
\partial_tU & \in L^\infty(0,T;\mathbb{X}^2),  \label{wk-reg-2} \\
\sqrt{\omega}\partial_tu & \in L^2(0,T;H^\theta(\Omega)),  \label{wk-reg-3} \\
\partial^2_tU & \in L^\infty(0,T;(\mathbb{V}^1)^*),  \label{wk-reg-4} 
\end{align}
for almost all $t\in(0,T]$ is called a {\sc{weak solution}} to Problem {\textbf P} with initial data $\mathcal{X}_0$ if the following identities hold almost everywhere on $[0,T]$, and for all $\Xi=(\Xi_1,\Xi_2)\in \mathbb{V}^1\times\mathbb{V}^1$:
\begin{align}
\frac{d}{dt}\left\langle \mathcal{X}(t),\Xi \right\rangle_{\mathcal{V}^{-1}\times\mathcal{V}^1} = \left\langle \mathcal{A}_{\theta,\alpha,\omega} \mathcal{X}(t),\Xi \right\rangle_{\mathcal{H}_0} + \left\langle \mathcal{F}(\mathcal{X}(t)),\Xi \right\rangle_{\mathcal{H}_0}.  \label{var1}
\end{align}
Also, the initial conditions \eqref{ic1}-\eqref{ic2} hold in the $L^2$-sense; i.e., 
\begin{align}
\left\langle \mathcal{X}(0),\Xi \right\rangle_{\mathcal{H}_0}=\left\langle \mathcal{X}_0,\Xi \right\rangle_{\mathcal{H}_0}, \quad \text{for every}\quad \Xi\in \mathbb{V}^1\times\mathbb{V}^1.  \label{var-ic} 
\end{align}
We say $\mathcal{X}(t)=(U(t),\partial_tU(t))$ is a {\sc{global weak solution}} of Problem {\textbf P} if it is a weak solution on $[0,T]$, for any $T>0.$
\end{definition}

\begin{remark}
Observe that we are solving a more general problem because $\gamma_0$ and $\gamma_1$, from $U_0$ and $U_1$ respectively, may be taken to be initial data {\em independent} of $u$ and $\partial_tu$.
However, if $\partial_tu(t)\in H^{s}(\Omega)$, for all $t>0$ and for some $s>1/2$, then $\gamma_t(t)=\partial_tu_{\mid\Gamma}(t)$.
\end{remark}

%%%%%%%%%%%%%%%%%%%%%%%%%%%%%%%%%%%%%%%%%%%%%%%%%%%%%%%%%%%%%%%%%%%
%%%%%%%%%%%%%%%%%%%%%%%%%%%%%%%%%%%%%%%%%%%%%%%%%%%%%%%%%%%%%%%%%%%
\section{Global existence}  \label{s:wp}
%%%%%%%%%%%%%%%%%%%%%%%%%%%%%%%%%%%%%%%%%%%%%%%%%%%%%%%%%%%%%%%%%%%
%%%%%%%%%%%%%%%%%%%%%%%%%%%%%%%%%%%%%%%%%%%%%%%%%%%%%%%%%%%%%%%%%%%

\begin{theorem}
Let $\mathcal{X}_0=(U_0,U_1)\in\mathcal{H}_0$ satisfy $\|\mathcal{X}_0\|_{\mathcal{H}_0}\le R$ for some $R>0$.
Then there exists a global weak solution to Problem {\textbf P} satisfying the additional regularity,
\begin{align}
\sqrt{\alpha\omega}\partial_tu & \in L^2(0,T;H^1(\Gamma)), \label{adreg1}
\end{align}
\end{theorem}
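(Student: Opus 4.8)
The plan is to construct the global weak solution via a Galerkin approximation scheme using the eigenbasis of the linear operator, derive uniform a priori estimates that are genuinely independent of the approximation index, and pass to the limit using compactness. Since the excerpt establishes that $(I_{4\times4}+\mathcal{A}_{\theta,\alpha,\omega})^{-1}$ is compact on $\mathbb{V}^1\times\mathbb{X}^2$, I would first fix a spectral basis $\{\Xi_j\}$ associated with the Wentzell-Laplacian and seek approximate solutions $\mathcal{X}^n(t)=\sum_{j=1}^n c_j^n(t)\Xi_j$ solving the projected ODE system. The nonlinearity $\mathcal{F}$ is only continuous (because $f\in C(\mathbb{R})$), so existence of local solutions to the finite-dimensional system follows from the Peano (not Picard) theorem; local solvability is all that is needed at this stage, as the a priori bounds will prevent blow-up.

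The heart of the proof is the energy estimate. First I would test the approximate equation against $\partial_t U^n = V^n$, which formally produces the energy identity
\[
\frac{1}{2}\frac{d}{dt}\left( \|U^n\|_{\mathbb{V}^1}^2 + \|V^n\|_{\mathbb{X}^2}^2 + 2\int_\Omega \widehat{f}(u^n)\,dx + 2\int_\Gamma \widehat{g}(\gamma^n)\,d\sigma \right) + \mathcal{D}^n = 0,
\]
where $\widehat{f},\widehat{g}$ are the primitives of $f,g$ and the dissipation term $\mathcal{D}^n$ collects $\|V^n\|_{\mathbb{X}^2}^2$ together with the fractional damping $\omega\|\nabla_W^{\theta/2}\partial_t u^n\|^2$ and the boundary damping $\alpha\omega\|\nabla_\Gamma \partial_t\gamma^n\|_{L^2(\Gamma)}^2$. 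The sign conditions \eqref{sign} control $\widehat{f},\widehat{g}$ from below only up to a coercivity defect, and the growth \eqref{growth} alone does not control the supercritical boundary terms; this is exactly where the balance condition \eqref{balance} must be invoked. The key analytic step is to absorb the anti-dissipative boundary contribution into the fractional interior damping: using Lemma \ref{t:pre-bnd-reg} to trade the $L^s(\Gamma)$-norms of the boundary nonlinearity against $\varepsilon\|\nabla u^n\|_{L^2(\Omega)}^2$, with $\varepsilon\in(0,\omega)$ chosen as in \eqref{balance}, and then the Sobolev–Poincar\'e inequality \eqref{eq:Poincare} with constant $C_\Omega$ to close the estimate. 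The combination $f(s)s + \frac{|\Gamma|}{|\Omega|}g(s)s - \frac{C_\Omega^2|\Gamma|^2}{4\varepsilon|\Omega|^2}|g'(s)s+g(s)|^2$ appearing in \eqref{balance} is precisely the quantity that emerges after this absorption, and its positivity for large $|s|$ yields a uniform bound on $\|\mathcal{X}^n(t)\|_{\mathcal{H}_0}$ on $[0,T]$ in terms of $R$ and $T$. This also delivers the stated extra regularity \eqref{adreg1}, since the $\alpha\omega$-boundary damping appears with a good sign in $\mathcal{D}^n$ after integration in time, giving $\sqrt{\alpha\omega}\,\partial_t u^n$ bounded in $L^2(0,T;H^1(\Gamma))$.

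With these bounds in hand, the regularity classes \eqref{wk-reg-1}--\eqref{wk-reg-4} follow: \eqref{wk-reg-1}--\eqref{wk-reg-2} from the energy norm, \eqref{wk-reg-3} from the interior fractional dissipation, and \eqref{wk-reg-4} by estimating $\partial_t^2 U^n$ in the dual space $(\mathbb{V}^1)^*$ directly from the equation, noting that the growth \eqref{growth} together with $r_1\ge\max\{r_2,2(r_2-1)\}$ ensures $\mathcal{F}(\mathcal{X}^n)$ lands in a space dual to $\mathbb{V}^1$. I would then extract weak-$*$ and weak limits along a subsequence, and use the Aubin–Lions–Simon compactness lemma—invoking the compact embeddings $\mathbb{V}^{r_1}\subset\mathbb{V}^{r_2}$ recorded after \eqref{Vr-norm}, and the dedicated compact embedding stated to reside in the appendix—to obtain strong $L^2$ convergence of $U^n$ and hence a.e.\ convergence of $u^n,\gamma^n$.

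The main obstacle is passing to the limit in the nonlinear terms $f(u^n)$ and $g(\gamma^n)$, which is delicate precisely because $f$ is only $C^0$ with supercritical growth, so no uniform $L^p$ bound with $p>1$ on $f(u^n)$ is available a priori and one cannot appeal to standard Nemytskii continuity in reflexive spaces. The remedy I would pursue is a combination of almost-everywhere convergence (from the strong $L^2$ convergence above) with a uniform integrability argument: the balance condition \eqref{balance} forces $\int_\Omega f(u^n)u^n + \frac{|\Gamma|}{|\Omega|}\int_\Gamma g(\gamma^n)\gamma^n$ to be uniformly bounded, which via a de la Vall\'ee-Poussin / Vitali convergence argument upgrades a.e.\ convergence of $f(u^n)$ to convergence in $L^1$, enough to identify the weak limit as $f(u)$ in the distributional formulation \eqref{var1}. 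Handling the boundary nonlinearity $g(\gamma^n)$ is comparatively easier since $g\in C^1$ and the boundary Sobolev embedding $H^1(\Gamma)\hookrightarrow L^{s_2}(\Gamma)$ holds for all finite $s_2$. Finally, since the bound is uniform on each $[0,T]$ and the construction yields a solution on an arbitrary $T$, a standard continuation argument produces the global weak solution, and the initial conditions \eqref{var-ic} are recovered from the weak continuity of $\mathcal{X}^n$ in the dual topology.
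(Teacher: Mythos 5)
Your proposal follows essentially the same route as the paper's proof: a spectral Galerkin scheme with Peano-type local solvability of the projected ODE system, the energy identity obtained by testing with $\partial_t U^{(n)}$, coercivity of the energy functional via the balance condition \eqref{balance} combined with the Sobolev--Poincar\'e inequality \eqref{eq:Poincare} and Young's inequality, continuation of the approximate solutions from the $n$-independent bound, and Alaoglu plus Aubin--Lions compactness (Lemma \ref{t:Lions}) to extract a.e.\ convergence and pass to the limit in \eqref{po13}. The only deviations are minor: in the paper the absorption of $\varepsilon\|\nabla u\|^2_{L^2(\Omega)}$ goes into the elastic part of $\|U\|^2_{\mathbb{V}^1}$ rather than into the fractional damping, Lemma \ref{t:pre-bnd-reg} is used to control $\|u(0)\|^{r_2}_{L^{r_2}(\Gamma)}$ in the initial energy bound \eqref{po1.8} rather than inside the absorption step, and your Vitali/uniform-integrability treatment of the nonlinear limit is a more explicit rendering of what the paper cites as standard arguments.
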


\begin{proof}
{\underline{Step 1.}} (An {\em a priori} estimate.) 
In \eqref{var1} take $\Xi=(\partial_tU,\partial_t U)$ to find the differential identity
\begin{align}
\frac{1}{2}\frac{d}{dt} & \left\{ \|\partial_tU\|^2_{\mathbb{X}^2} + \|U\|^2_{\mathbb{V}^1} + 2(F(u),1)_{L^2(\Omega)} + 2(G(u),1)_{L^2(\Gamma)} \right\}  \notag \\ 
& + \omega\|\nabla^\theta \partial_tu\|^2_{L^2(\Omega)} + \|\partial_tu\|^2_{L^2(\Omega)} + \alpha\omega\|\nabla \partial_tu\|^2_{L^2(\Gamma)} + \|\partial_tu\|^2_{L^2(\Gamma)} = 0.  \label{po1}
\end{align}
Using \eqref{growth} and setting $\tilde F'=f$ and $\tilde G'=g$, a simple integration by parts on \eqref{sign} shows, for all $u\in H^1(\Omega)$, and $\gamma\in H^1(\Gamma),$
\begin{align}
(\tilde F(u),1)_{L^2(\Omega)} & \ge (f(u),u)_{L^2(\Omega)}+\frac{M_1}{2}\|u\|^2_{L^2(\Omega)} \label{cons1}
\end{align}
and
\begin{align}
(\tilde G(\gamma),1)_{L^2(\Gamma)} & \ge (g(\gamma),\gamma)_{L^2(\Gamma)}+\frac{M_2}{2}\|\gamma\|^2_{L^2(\Gamma)}. \label{cons2}
\end{align}
To bound the products on the right-hand sides of \eqref{cons1} and \eqref{cons2} from below, we utilize \eqref{balance}.
Following \cite[(2.22)]{Gal12-2}, \cite[(3.34)]{Gal-Shomberg15-2} and \cite[(3.11)]{RBT01}, we estimate the products as 
\begin{align}
& (f(u),u)_{L^2(\Omega)}+(g(u),u)_{L^2(\Gamma)} \notag \\
& = \int_\Omega \left( f(u)u+\frac{|\Gamma|}{|\Omega|}g(u)u \right) dx - \frac{|\Gamma|}{|\Omega|}\int_\Omega \left( g(u)u-\frac{1}{|\Gamma|}\int_\Gamma g(u)u\mathrm{d}\sigma \right) dx,  \label{po3}
\end{align}
whereby we exploit the Poincar\'{e} inequality (\ref{eq:Poincare}) and Young's
inequality to see that, for all $\varepsilon>0$, 
\begin{align}
\frac{|\Gamma|}{|\Omega|}\int_\Omega\left( g(u)u-\frac{1}{
|\Gamma|}\int_\Gamma g(u)u d\sigma \right) dx & \le C_\Omega
\frac{|\Gamma|}{|\Omega|}\int_\Omega|\nabla (g(u)u)|dx \notag \\
& =C_\Omega\frac{|\Gamma|}{|\Omega|}\int_\Omega|\nabla u(g'(u)u+g(u))|dx  \notag \\
& \le \varepsilon \|\nabla u\|_{L^2(\Omega)}^2+\frac{C_\Omega^2|\Gamma|^2}{4\varepsilon |\Omega|^2}\int_\Omega|g'(u)u+g(u)|^2dx. \label{po4}
\end{align}
Then combining \eqref{po3} and \eqref{po4}, and applying assumption \eqref{balance} yields
\begin{equation}
(f(u),u)_{L^2(\Omega)}+(g(u),u)_{L^2(\Gamma)} \ge \|u\|_{L^{r_1}(\Omega)}^{r_1}-\varepsilon \|\nabla u\|_{L^2(\Omega)}^2-C_\delta,  \label{po5}
\end{equation}
for some positive constants $\delta$ and $C_\delta$ that are independent
of $t$ and $\varepsilon$.
Hence, together \eqref{cons1} and \eqref{cons2} become
\begin{align}
(F(u),1)_{L^2(\Omega)}+(G(u),1)_{L^2(\Gamma)} & \ge \|u\|_{L^{r_1}(\Omega)}^{r_1}+\frac{M_1}{2}\|u\|^2_{L^2(\Omega)}+\frac{M_2}{2}\|u\|^2_{L^2(\Gamma)}-\varepsilon \|\nabla u\|_{L^2(\Omega)}^2-C_\delta. \label{cons3}
\end{align}
Moreover, \eqref{cons3} provides a lower-bound to the functional 
\begin{align}
E(t) := \|\partial_tU(t)\|^2_{\mathbb{X}^2} + \|U(t)\|^2_{\mathbb{V}^1} + 2(F(u(t)),1)_{L^2(\Omega)} + 2(G(u(t)),1)_{L^2(\Gamma)}.  \notag
\end{align}
Integrating the identity \eqref{po1} over $(0,t)$, yields
\begin{align}
E(t) + 2\int_0^t \left( \omega\|\nabla^\theta \partial_tu(\tau)\|^2_{L^2(\Omega)} + \alpha\omega\|\nabla \partial_tu(\tau)\|^2_{L^2(\Gamma)} + \|\partial_tU(\tau)\|^2_{\mathbb{X}^2} \right)d\tau = E(0).  \label{po1.5}
\end{align}
We can find an upper-bound on $E(0)$ with \eqref{growth}.
Evidently
\begin{align}
& 2(F(u(0)),1)_{L^2(\Omega)} + 2(G(u(0)),1)_{L^2(\Gamma)} \notag \\
& \le \ell_1(\|u(0)\|_{L^1(\Omega)}+\|u(0)\|^{r_1}_{L^{r_1}(\Omega)})+\ell_2(\|u(0)\|_{L^1(\Gamma)}+\|u(0)\|^{r_2}_{L^{r_2}(\Gamma)}).  \label{po1.6}
\end{align}
Hence, \eqref{po1.6} and the embedding $\mathbb{V}^1\hookrightarrow\mathbb{X}^2$ show
\begin{align}
E(0) & \le \|\partial_tu(0)\|^2_{L^2(\Omega)} + \|\nabla u(0)\|^2_{L^2(\Omega)} + \|u(0)\|^2_{L^2(\Omega)} + \|\partial_tu(0)\|^2_{L^2(\Gamma)} + \|\nabla u(0)\|^2_{L^2(\Gamma)} + \|u(0)\|^2_{L^2(\Gamma)}  \notag \\ 
& + \ell_1(\|u(0)\|_{L^1(\Omega)}+\|u(0)\|^{r_1}_{L^{r_1}(\Omega)})+\ell_2(\|u(0)\|_{L^1(\Gamma)}+\|u(0)\|^{r_2}_{L^{r_2}(\Gamma)}) \notag \\
& \le \|\partial_tU(0)\|^2_{\mathbb{X}^2} + \|U(0)\|^2_{\mathbb{V}^1} + C\left( \|U(0)\|_{\mathbb{V}^1} + \|u(0)\|^{r_1}_{L^{r_1}(\Omega)} + \|u(0)\|^{r_2}_{L^{r_2}(\Gamma)} \right).  \label{po1.7}
\end{align}
Thus \eqref{po1.5} and \eqref{po1.7} yield, for all $t\ge0,$
\begin{align}
& \|\partial_tU(t)\|^2_{\mathbb{X}^2} + \|U(t)\|^2_{\mathbb{V}^1} + 2(F(u(t)),1)_{L^2(\Omega)} + 2(G(u(t)),1)_{L^2(\Gamma)} \notag \\
& + 2\int_0^t \left( \omega\|\nabla^\theta \partial_tu(\tau)\|^2_{L^2(\Omega)} + \alpha\omega\|\nabla \partial_tu(\tau)\|^2_{L^2(\Gamma)} + \|\partial_tU(\tau)\|^2_{\mathbb{X}^2} \right)d\tau \notag \\
& \le \|\partial_tU(0)\|^2_{\mathbb{X}^2} + \|U(0)\|^2_{\mathbb{V}^1} + C\left( \|U(0)\|_{\mathbb{V}^1} + \|u(0)\|^{r_1}_{L^{r_1}(\Omega)} + \|u(0)\|^{r_2}_{L^{r_2}(\Gamma)} \right) \notag \\
& \le \|\partial_tU(0)\|^2_{\mathbb{X}^2} + \|U(0)\|^2_{\mathbb{V}^1} + C\left( \|U(0)\|_{\mathbb{V}^1} + \|u(0)\|^{r_1}_{L^{r_1}(\Omega)} + 1 \right), \label{po1.8}
\end{align}
where the last inequality follows from Lemma \ref{t:pre-bnd-reg}.

Now we see that, for any $T>0$, there hold
\begin{align}
U & \in L^\infty(0,T;\mathbb{V}^1),  \label{po6} \\
\partial_tU & \in L^\infty(0,T;\mathbb{X}^2),  \label{po7} \\
\sqrt{\omega} \partial_tu & \in L^2(0,T;H^\theta(\Omega)), \label{po7.1} \\
\sqrt{\alpha\omega} \partial_tu & \in L^2(0,T;H^1(\Gamma)), \label{po7.3} \\
F(u) & \in L^\infty(0,T;L^1(\Omega)), \label{po8} \\
G(u) & \in L^\infty(0,T;L^1(\Gamma)). \label{po9} 
\end{align}
We have found $\mathcal{X}\in L^\infty(0,T;\mathcal{H}_0)$.
Moreover, since $U\in L^\infty(0,T;\mathbb{V}^1)$, we have $\Delta_WU \in L^\infty(0,T;(\mathbb{V}^1)^*)$ and as $\sqrt{\alpha\omega}\partial_tU\in L^2(0,T;\mathbb{V}^1)$, we also have $\Delta_W^{\theta,\alpha,\omega}\partial_tU \in L^2(0,T;(\mathbb{V}^1)^*)$.
Therefore, after comparing terms in the first equation of \eqref{po1}, we see that 
\begin{equation}
\partial_t^2U\in L^2(0,T;(\mathbb{V}^1)^*). \label{po10}
\end{equation}
Hence, this justifies our choice of test function in \eqref{po1}. 
With \eqref{po7.3}, we also find \eqref{adreg1} as claimed.
This concludes Step 1.

{\underline{Step 2.}} (A Galerkin basis.)
According to Section \ref{s:prelim}, for each $\theta\in[\frac{1}{2},1),$ the operator $\mathcal{A}_{\theta,\alpha,\omega}$ admits a system of eigenfunctions $\Psi_i^{\theta,\alpha,\omega}=(\psi^{\theta,\alpha,\omega},\phi^{\theta,\alpha,\omega},\psi_{\mid\Gamma}^{\theta,\alpha,\omega},\phi_{\mid\Gamma}^{\theta,\alpha,\omega})$ satisfying $\{\Psi_i^{\theta,\alpha,\omega}\}_{i=1}^\infty \subset D(\mathcal{A}_{\theta,\alpha,\omega})\cap (C^2({\overline{\Omega}}) \times C^2(\Gamma) \times C^2({\overline{\Omega}}) \times C^2(\Gamma))$ and 
\[
\mathcal{A}_{\theta,\alpha,\omega} \Psi_i^{\theta,\alpha,\omega}=\Lambda_i\Psi_i^{\theta,\alpha,\omega}, \quad i=1,2,\dots,
\]
where the eigenvalues $\Lambda_i=\Lambda_i^{\theta,\alpha,\omega}\in(0,+\infty)$ may be put into increasing order and counted according to their multiplicity to form a diverging sequence.
This means the pair $(\Lambda_i,\Psi_i)$, $\Psi_i=\Psi_i^{\theta,\alpha,\omega}$ is a classical solution of the elliptic problem
\[
\left\{ \begin{array}{ll} 
-\Delta\psi_i + \psi_i + \omega(-\Delta)^\theta \phi_i + \phi_i  = \Lambda_i\psi_i & \text{in}\ \Omega \\ 
-\alpha\omega\Delta_\Gamma \phi_{i\mid\Gamma} + \phi_{i\mid\Gamma} - \Delta_\Gamma\psi_{i\mid\Gamma} + \psi_{i\mid\Gamma} = \Lambda_i\psi_{i\mid\Gamma} & \text{on}\ \Gamma. \end{array} \right.
\]
Also due to standard spectral theory, these eigenfunctions form an orthogonal basis in $\mathcal{H}_0$ that is orthonormal in $L^2(\Omega)\times L^2(\Omega)\times L^2(\Gamma)\times L^2(\Gamma)$. 

Let $T>0$ be fixed. 
For $n\in \mathbb{N}$, set the spaces 
\begin{equation*}
\mathbb{H}_n := \mathrm{span}\left\{ \Psi_1^{\theta,\alpha,\omega},\dots,\Psi_n^{\theta,\alpha,\omega}\right\} \subset \mathcal{H}_0 \quad \text{and} \quad \mathbb{H}_\infty := \bigcup_{n=1}^\infty \mathbb{H}_n.
\end{equation*}
Obviously, $\mathbb{H}_\infty$ is a dense subspace of $\mathcal{H}_0$. 
For each $n\in \mathbb{N}$, let $\mathbb{P}_n:\mathcal{H}_0\rightarrow \mathbb{H}_n$ denote the orthogonal projection of $\mathcal{H}_0$ onto $\mathbb{H}_n$. 
Thus, we seek functions of the form 
\begin{equation}
\mathcal{X}^{(n)}(t)=\sum_{i=1}^n A_i(t)\Psi_i^{\theta,\alpha,\omega}  \label{po11}
\end{equation}
that will satisfy the associated discretized Problem \textbf{P}$_n$
described below. 
The functions $A_i$ are assumed to be (at least) $C^2((0,T))$ for $i=1,\dots,n$. 
Precisely, 
\begin{equation}
u^{(n)}(t)=\sum_{i=1}^n A_i(t)\psi_i^{\theta,\alpha,\omega}, \quad \partial_t u^{(n)}(t)=\sum_{i=1}^n A_i'(t)\psi_i^{\theta,\alpha,\omega}, \label{po12-1}
\end{equation}
and
\begin{equation}
u^{(n)}_{\mid\Gamma}(t)=\sum_{i=1}^n A_i(t)\phi_{i\mid\Gamma}^{\theta,\alpha,\omega}, \quad \partial_t u^{(n)}_{\mid\Gamma}(t)=\sum_{i=1}^n A_i'(t)\phi_{i\mid\Gamma}^{\theta,\alpha,\omega}. \label{po12-2}
\end{equation}
Using semigroup properties of $\mathcal{A}_{\theta,\alpha,\omega}$, the domain $D(\mathcal{A}^{\theta,\alpha,\omega})$ is dense in $\mathcal{H}_0$.
So to approximate the given initial data $\mathcal{X}_0\in \mathcal{H}_0$, we may take $\mathcal{X}_0^{(n)}\in D(\mathcal{A}^{\theta,\alpha,\omega})$ such that $\mathcal{X}_0^{(n)}\rightarrow \mathcal{X}_0$ in $\mathcal{H}_0$.

For $T>0$ and for each integer $n\ge1$, the weak formulation of the
approximate Problem \textbf{P}$_n$ is: to find $\mathcal{X}^{(n)}$ given by \eqref{po11} such that, for all ${\overline{\mathcal{X}}}=(\overline{U},\overline{V})\in \mathbb{H}_n$, the equation
\begin{equation}
\left\langle \partial_t\mathcal{X}^{(n)},{\overline{\mathcal{X}}}\right\rangle_{\mathcal{H}_0} + \left\langle \mathcal{A}_{\theta,\alpha,\omega}\mathcal{X}^{(n)},{\overline{\mathcal{X}}} \right\rangle_{\mathcal{H}_0}+\left\langle \mathbb{P}_n\mathcal{F}\left( \mathcal{X}^{(n)}\right),{\overline{\mathcal{X}}}\right\rangle_{\mathcal{H}_0}=0 \label{po13}
\end{equation}
holds for almost all $t\in(0,T)$, subject to the initial conditions 
\begin{equation}
\left\langle \mathcal{X}^{(n)}(0),{\overline{\mathcal{X}}} \right\rangle _{\mathcal{H}_0}=\left\langle \mathcal{X}_0^{(n)},{\overline{\mathcal{X}}}\right\rangle_{\mathcal{H}_0}.  \label{po14}
\end{equation}

To show the existence of at least one solution to \eqref{po13}-\eqref{po14}, we now suppose that $n$ is fixed and we take ${\overline{\mathcal{X}}}=\mathcal{X}^{(k)}$ for some $1\le k\le n$. 
Then substituting the discretized functions \eqref{po12-1}-\eqref{po12-2} into \eqref{po13}-\eqref{po14}, we find a system of ordinary differential equations in the unknowns $A_k=A_k(t)$ on $\mathcal{X}^{(n)}$. 
Also, we recall that
\begin{equation*}
\left\langle \mathbb{P}_n\mathcal{F}\left( \mathcal{X}^{(n)}\right),\mathcal{X}^{(k)} \right\rangle_{\mathcal{H}_0}=\left\langle \mathcal{F}\left( \mathcal{X}^{(n)}\right),\mathbb{P}_n\mathcal{X}^{(k)} \right\rangle_{\mathcal{H}_0}=\left\langle \mathcal{F}\left( \mathcal{X}^{(n)}\right),\mathcal{X}^{(k)} \right\rangle_{\mathcal{H}_0}.
\end{equation*}
Since $f\in C(\mathbb{R})$ and $g\in C^1(\mathbb{R})$, we may apply Cauchy's theorem for ODEs to find that there is $T_n\in (0,T)$ such that $A_k\in C^2((0,T_n))$, for $1\le k\le n$, and \eqref{po13} holds in the classical sense for all $t\in [0,T_n]$. 
This shows the existence of at least one local solution to the approximate Problem \textbf{P}$_n$ and ends Step 2.

\underline{Step 3}. (Boundedness and continuation of approximate maximal
solutions.) 
We begin by noticing that the {\em a priori} estimate \eqref{po1.8} holds for any approximate solution $\mathcal{X}^{(n)}$ of Problem \textbf{P}$_n$ on the interval $[0,T_n)$, where $T_n<T$. 
Thanks to the boundedness of the projector $\mathbb{P}_n$, we infer
\begin{align}
& \|\partial_tU^{(n)}(t)\|^2_{\mathbb{X}^2} + \|U^{(n)}(t)\|^2_{\mathbb{V}^1} + 2(F(u^{(n)}(t)),1)_{L^2(\Omega)} + 2(G(u^{(n)}(t)),1)_{L^2(\Gamma)} \notag \\
& + 2\int_0^t \left( \omega\|\nabla^\theta \partial_tu^{(n)}(\tau)\|^2_{L^2(\Omega)} + \alpha\omega\|\nabla \partial_tu^{(n)}(\tau)\|^2_{L^2(\Gamma)} + \|\partial_tU^{(n)}(\tau)\|^2_{\mathbb{X}^2} \right)d\tau \notag \\
& \le \|\partial_tU(0)\|^2_{\mathbb{X}^2} + \|U(0)\|^2_{\mathbb{V}^1} + C\left( \|U(0)\|_{\mathbb{V}^1} + \|u(0)\|^{r_1}_{L^{r_1}(\Omega)} + \|u(0)\|^{r_2}_{L^{r_2}(\Gamma)} \right).  \label{po15}
\end{align}
Since the right-hand side of \eqref{po15} is independent of $n$ and $t$, every
approximate solution may be extended to the whole interval $[0,T]$, and
because $T>0$ is arbitrary, any approximate solution is a global one. 
From above in Step 1, we also obtain the uniform bounds \eqref{po6}-\eqref{po10} for each approximate solution $\mathcal{X}^{(n)}$.
Thus,
\begin{align}
U^{(n)} & \in L^\infty(0,T;\mathbb{V}^1),  \label{po16} \\
\partial_tU^{(n)} & \in L^\infty(0,T;\mathbb{X}^2),  \label{po17} \\
\sqrt{\omega} \partial_tu^{(n)} & \in L^2(0,T;H^\theta(\Omega)), \label{po17.1} \\
\sqrt{\alpha\omega} \partial_tu^{(n)} & \in L^2(0,T;H^1(\Gamma)), \label{po17.3} \\
F(u^{(n)}) & \in L^\infty(0,T;L^1(\Omega)), \label{po18} \\
G(u^{(n)}) & \in L^\infty(0,T;L^1(\Gamma)). \label{po19} 
\end{align}
This concludes Step 3.

\underline{Step 4}. (Convergence of approximate solutions.) 
We begin this step by applying Alaoglu's theorem (cf. e.g. \cite[Theorem 6.64]{Renardy&Rogers04}) to the uniform bounds \eqref{po16}-\eqref{po19} to find that there is a subsequence of $\mathcal{X}^{(n)}$, generally not relabelled, and a function $\mathcal{X}=(u,\partial_tu,u_{\mid\Gamma},\partial_tu_{\mid\Gamma})$, obeying \eqref{po6}-\eqref{po10}, such that as $n\rightarrow \infty$, 
\begin{align}
U^{(n)}\rightharpoonup U & \quad\text{weakly$-*$ in}\quad L^\infty(0,T;\mathbb{V}^1), \label{po20} \\ 
\partial_tU^{(n)}\rightharpoonup \partial_tU & \quad\text{weakly$-*$ in}\quad L^\infty(0,T;\mathbb{X}^2), \label{po21} \\ 
\sqrt{\omega}\partial_tu^{(n)}\rightharpoonup u & \quad\text{weakly in}\quad L^2(0,T;H^\theta(\Omega)), \label{po22} \\ 
\sqrt{\alpha\omega}\partial_tu^{(n)}\rightharpoonup u & \quad\text{weakly in}\quad L^2(0,T;H^1(\Gamma)), \label{po23} \\
\partial_tU^{(n)}\rightharpoonup \partial_tU & \quad\text{weakly in}\quad L^2(0,T;(\mathbb{V}^1)^*). \label{po24}
\end{align}
Using the above convergences \eqref{po20} and \eqref{po21}, as well as the fact that the injection $\mathbb{V}^1\hookrightarrow\mathbb{X}^2$ is compact, we draw upon the conclusion of the Aubin-Lions Lemma (cf. Lemma \ref{t:Lions}) to deduce the following embedding is compact
\begin{align}
\mathcal{W}:=\{ U\in L^2(0,T;\mathbb{V}^1) : \partial_tU\in L^2(0,T;\mathbb{X}^2)\} \hookrightarrow L^2(0,T;\mathbb{X}^2) \label{compact-u}
\end{align}
(see, e.g., \cite{Temam88}).
Thus,
\begin{eqnarray}
U^{(n)} \rightarrow U & \text{strongly in} & L^2(0,T;\mathbb{X}^2), \label{st-con-u}
\end{eqnarray}
and deduce that $U^{(n)}$ converges to $U$, almost everywhere in $\Omega\times (0,T)$. 
The last strong convergence property is enough to pass to the limit in the
nonlinear terms since $f,g\in C^1(\mathbb{R})$ (see, e.g., \cite{Gal12-2,Gal&Warma10}). 
Indeed, on account of standard arguments (cf. also \cite{CGGM10}) we have
\begin{eqnarray}
\mathbb{P}_n\mathcal{F}(\mathcal{X}^{(n)}) \rightharpoonup \mathcal{F}(\mathcal{X}) & \text{weakly in} & L^2(0,T;\mathcal{H}_0).  \label{po25}  
\end{eqnarray}
At this point the convergence properties \eqref{po20}-\eqref{po25} are sufficient to pass to the limit as $n\rightarrow\infty$ in equation \eqref{po13}.
Additionally, we recover \eqref{var1} using standard density arguments. 
The proof of the theorem is finished.
\end{proof}

\noindent {\bf{Concerning uniqueness.}}
A proof of the following conjecture is needed to show that the weak solutions to Problem {\textbf{P}} constructed above depend continuously on initial data, and hence, are unique. 

\begin{conjecture} 
Let $T>0$, $R>0$ and $\mathcal{X}_{01}=(U_{01},U_{11}),\mathcal{X}_{02}=(U_{02},U_{12})\in\mathcal{H}_0$ be such that $\|\mathcal{X}_{01}\|_{\mathcal{H}_0}\le R$ and $\|\mathcal{X}_{02}\|_{\mathcal{H}_0}\le R$.
Any two weak solutions, $\mathcal{X}^1(t)$ and $\mathcal{X}^2(t)$, to Problem {\textbf{P}} on $[0,T]$ corresponding to the initial data $\mathcal{X}_{01}$ and $\mathcal{X}_{02}$, respectively, satisfy for all $t\in[0,T]$,
\begin{equation}  \label{contdep}
\left\|\mathcal{X}^1(t)-\mathcal{X}^2(t)\right\|_{\mathcal{H}_0} \le e^{Q(R)t} \left\|\mathcal{X}_{01} - \mathcal{X}_{02}\right\|_{\mathcal{H}_0}.
\end{equation}
\end{conjecture}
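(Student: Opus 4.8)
The plan is to follow the energy method on the difference of the two solutions, mirroring Step 1 of the existence proof, and to close with Gronwall's inequality. Write $\bar{\mathcal{X}}=\mathcal{X}^1-\mathcal{X}^2$, $\bar{U}=U^1-U^2$, $\bar{u}=u^1-u^2$ and $\bar{\gamma}=\gamma^1-\gamma^2$. Since $\mathcal{A}_{\theta,\alpha,\omega}$ is linear, subtracting the two copies of \eqref{var1} shows that $\bar{\mathcal{X}}$ satisfies the same variational identity, now with the forcing term $\mathcal{F}(\mathcal{X}^1)-\mathcal{F}(\mathcal{X}^2)$ on the right. The delicate choice is the test function.

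First I would test the difference equation with $\Xi=(\partial_t\bar{U},\partial_t\bar{U})$, exactly as in \eqref{po1}, to obtain
\begin{align*}
& \frac{1}{2}\frac{d}{dt}\left\{ \|\partial_t\bar{U}\|^2_{\mathbb{X}^2} + \|\bar{U}\|^2_{\mathbb{V}^1} \right\} + \omega\|\nabla^\theta\partial_t\bar{u}\|^2_{L^2(\Omega)} + \|\partial_t\bar{U}\|^2_{\mathbb{X}^2} + \alpha\omega\|\nabla\partial_t\bar{\gamma}\|^2_{L^2(\Gamma)} \\
& = -(f(u^1)-f(u^2),\partial_t\bar{u})_{L^2(\Omega)} - (g(\gamma^1)-g(\gamma^2),\partial_t\bar{\gamma})_{L^2(\Gamma)}.
\end{align*}
Note the nonlinear contributions no longer appear inside the time derivative, since $F(u^1)-F(u^2)$ is not the potential of $\bar{u}$. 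To run Gronwall I would need to bound each inner product on the right by $\varepsilon$ times the available dissipation plus $Q(R)(\|\partial_t\bar{U}\|^2_{\mathbb{X}^2}+\|\bar{U}\|^2_{\mathbb{V}^1})$, uniformly over the $R$-ball. In the subcritical regime $r_1\in[1,3]$ this is precisely what the local Lipschitz continuity of $\mathcal{F}:\mathbb{V}^1\times\mathbb{X}^2\to\mathbb{V}^1\times\mathbb{X}^2$ (recorded after \eqref{growth}) provides, giving $\|\mathcal{F}(\mathcal{X}^1)-\mathcal{F}(\mathcal{X}^2)\|_{\mathbb{X}^2}\le Q(R)\|\bar{U}\|_{\mathbb{V}^1}$; Young's inequality and Gronwall then yield \eqref{contdep}. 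So in the subcritical case the conjecture is a theorem.

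The hard part, and the reason the statement is only conjectured, is the supercritical regime $r_1>3$ together with $f\in C^0$ only. Here both the Lipschitz and the mean-value routes fail: $f$ need not be differentiable, so the identity $f(u^1)-f(u^2)=f'(\sigma)\bar{u}$ is unavailable, and even when $f\in C^1$ the growth $|f'(s)|\le C(1+|s|^{r_1-2})$ forces control of $\sigma$ in $L^p(\Omega)$ with $p$ scaling like $r_1$, whereas the a priori bound \eqref{po1.8} only places $u^i$ in $L^\infty(0,T;L^{r_1}(\Omega))\cap L^\infty(0,T;H^1(\Omega))$, and $H^1(\Omega)\hookrightarrow L^6(\Omega)$ in dimension three falls short once $r_1>3$. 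The strong damping supplies only $\partial_t\bar{u}\in L^2(0,T;H^\theta(\Omega))$ with $\theta<1$, which does not recover the missing integrability. Crucially, the balance condition \eqref{balance} is a \emph{one-sided} asymptotic bound engineered to make $E(t)$ coercive for a single solution; it controls $f(u)u+\frac{|\Gamma|}{|\Omega|}g(u)u$ from below but carries no information about the difference $f(u^1)-f(u^2)$, and so is of no use here.

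A natural alternative I would then try is to test with $\Xi=(\bar{U},\bar{U})$, which replaces the obstructive term by $(f(u^1)-f(u^2),\bar{u})_{L^2(\Omega)}$; if $f$ were monotone up to a linear perturbation, i.e. $s\mapsto f(s)+M_1 s$ nondecreasing, this term would satisfy $(f(u^1)-f(u^2),\bar{u})_{L^2(\Omega)}\ge -M_1\|\bar{u}\|^2_{L^2(\Omega)}$, with the analogue on $\Gamma$ following from $g'\ge -M_2$, and no growth restriction would be needed. The obstruction is that the sign condition \eqref{sign} is only a liminf as $|s|\to\infty$ and does not force genuine monotonicity of $f$ on all of $\mathbb{R}$; moreover the second-order-in-time structure means testing with $\bar{U}$ controls $\frac{d}{dt}(\partial_t\bar{U},\bar{U})$ rather than a coercive energy, leaving $\|\partial_t\bar{U}\|^2_{\mathbb{X}^2}$ to be reconstructed separately. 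I therefore expect the conjecture to become provable under either (i) the subcritical restriction $r_1\le 3$, or (ii) an added one-sided Lipschitz hypothesis $(f(s_1)-f(s_2))(s_1-s_2)\ge -M_1|s_1-s_2|^2$; absent such structure, the supercritical, merely-continuous case appears genuinely open, consistent with the paper's own remark that a continuous dependence estimate is not available.
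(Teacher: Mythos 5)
There is no proof in the paper to compare against: the statement is posed as a \emph{conjecture}, and the paper's stated position is precisely that a continuous dependence estimate of the form \eqref{contdep} is unavailable under the standing hypotheses. Your proposal correctly refrains from claiming a proof, and your diagnosis coincides almost exactly with the paper's own remarks following the conjecture: testing the difference equation with $(\partial_t\bar{U},\partial_t\bar{U})$ produces the products $(f(u^1)-f(u^2),\partial_t\bar{u})_{L^2(\Omega)}$ and $(g(u^1)-g(u^2),\partial_t\bar{u})_{L^2(\Gamma)}$, which are controllable when $r_1\in[1,3]$ via the local Lipschitz continuity of $\mathcal{F}$ (the paper cites Lemma 2.6 of Graber--Shomberg for exactly this), but not for arbitrary $r_1$ with $f$ merely continuous. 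Your conclusion --- subcritical case provable, supercritical merely-continuous case genuinely open --- is the paper's position verbatim.

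Your analysis also adds content the paper leaves implicit, and it is sound: the observation that the balance condition \eqref{balance} is a one-sided coercivity statement tailored to a single solution and carries no information about differences $f(u^1)-f(u^2)$; and the alternative of testing with $(\bar{U},\bar{U})$ under an added one-sided Lipschitz hypothesis, with the correct caveats that \eqref{sign} is only asymptotic and that for a second-order-in-time equation this test function controls $\frac{d}{dt}(\partial_t\bar{U},\bar{U})_{\mathbb{X}^2}$ rather than a coercive energy. Two minor cautions if you develop the subcritical case into a theorem: (i) for \emph{arbitrary} weak solutions (as opposed to the Galerkin limits constructed in the paper) the admissibility of $\partial_t\bar{U}$ as a test function and the validity of the resulting energy identity are themselves delicate, so the claim should be phrased for solutions satisfying the energy identity; (ii) the local Lipschitz bound $\|\mathcal{F}(\mathcal{X}^1)-\mathcal{F}(\mathcal{X}^2)\|_{\mathbb{X}^2}\le Q(R)\|\bar{U}\|_{\mathbb{V}^1}$ requires an $L^\infty(0,T;\mathbb{V}^1)$ bound on both solutions by a quantity depending only on $R$, which for arbitrary weak solutions must be extracted from Definition \ref{d:ws} rather than from the a priori estimate \eqref{po1.8}, since the latter is proved only along the constructed approximating sequence.
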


In order to prove the conjecture, typically one needs to control products of the form
\[
(f(u^1)-f(u^2),\partial_t\bar u)_{L^2(\Omega)} \quad \text{and} \quad (g(u^1)-g(u^2),\partial_t\bar u)_{L^2(\Gamma)}
\]
where $u^1$ and $u^2$ are two weak solutions corresponding to (possibly the same) data $\mathcal{X}_{01}=(U_{01},U_{11})=(u_{01},\gamma_{01},u_{11},\gamma_{11})$ and $\mathcal{X}_{02}=(U_{02},U_{12})=(u_{02},\gamma_{02},u_{12},\gamma_{12})$.
A suitable control on $\|f(u^1)-f(u^2)\|_{L^q(\Omega)}$, for example, is readily available when we assume \eqref{growth} with $r_1\in[1,3]$ (cf. \cite[Lemma 2.6]{Graber-Shomberg-16})), but this is no longer valid when we assume $r_1\ge1$ is arbitrary.
In the later case it would be interesting to investigate whether a {\em generalized semiflow} in the sense of \cite{Ball00,Ball04} exists.
Under certain conditions, such generalized semiflows admit global attractors which have similar properties to their well-posed counterparts (cf. \cite{Hale88}).

%%%%%%%%%%%%%%%%%%%%%%%%%%%%%%%%%%%%%%%%%%%%%%%%%%%%%%%%%%%%%%%%%%%
%%%%%%%%%%%%%%%%%%%%%%%%%%%%%%%%%%%%%%%%%%%%%%%%%%%%%%%%%%%%%%%%%%%
\appendix
\section{}  \label{s:ap-1}
%%%%%%%%%%%%%%%%%%%%%%%%%%%%%%%%%%%%%%%%%%%%%%%%%%%%%%%%%%%%%%%%%%%
%%%%%%%%%%%%%%%%%%%%%%%%%%%%%%%%%%%%%%%%%%%%%%%%%%%%%%%%%%%%%%%%%%%

As introduced in Section \ref{s:prelim}, the Wentzell-Laplacian $\Delta_W$ on $\mathbb{X}^2$ with domain 
\begin{align}
D(\Delta_W):=\{U=(u,\gamma)^{tr}\in\mathbb{V}^1:-\Delta u\in L^2(\Omega), \partial_{\bf n}u=-\gamma+\Delta_\Gamma\gamma \in L^2(\Gamma), \gamma=tr_D(u)\}.  \notag
\end{align}
is positive, self-adjoint and has compact resolvent \cite{AreMetPalRom-2003}.
From \cite[Theorem A.37 (Spectral Theorem) and (A.28)]{Milani&Koksch05}, we know that for each $\theta\in[\frac{1}{2},1)$,
\[
D(\Delta_W^\theta)=\left\{U=(u,\gamma)^{tr}\in D(\Delta_W):\sum_{j=1}^\infty \Lambda_j^{2\theta}|(U,W_j)|^2<\infty \right\}\quad \text{where} \quad \Delta_W W_j=\Lambda_jW_j,
\]
and the sequence $(\Lambda_j)_{j=1}^\infty$ contains real, strictly positive eigenvalues, each having finite multiplicity, which can be ordered into a nondecreasing sequence in which
\[
\lim_{j\rightarrow\infty} \Lambda_j = +\infty.
\]
We mention some results \cite[Theorem 5.2 (c)]{Gal12-1} concerning the regularity of the eigenfunctions $W_j$.
If $\Gamma$ is Lipschitz, then every eigenfunction $W_j\in \mathcal{V}^1$, and in fact $W_j\in C({\overline{\Omega}})\cap C^\infty(\Omega)$, for every $j$.
If $\Gamma$ is of class $\mathcal{C}^2$, then every eigenfunction $W_j\in \mathcal{V}^1\cap C^2({\overline{\Omega}})$ for every $j$.

Here we remind the reader how we define the fractional powers of the Wentzell-Laplacian with a Fourier series.
Thus,
\[
\Delta_W^\theta U=\sum_{j=1}^\infty \Lambda_j^{2\theta}(U,W_j)W_j,
\]
and we can rely on (cf. \cite[(2.6)]{dovgar}) to define the fractional flux, where,
\[
\Delta_W^{\theta/2}U= \nabla_W^\theta U=\sum_{i=1}^N\frac{\partial^\theta U}{\partial x^\theta_i}{\bf e}_i,
\]
and
\[
\frac{d^\theta U}{dx^\theta}=\frac{1}{\Gamma(1-\theta)}\frac{d}{dx}\int_{-\infty}^x(x-y)^{-\theta}U(y)dy.
\]

The following result is the classical Aubin-Lions Lemma, reported here for the reader's convince (cf. \cite{Lions69}, and, e.g. \cite[Lemma 5.51]{Tanabe79} or \cite[Theorem 3.1.1]{Zheng04}). 

\begin{lemma}  \label{t:Lions}
Let $X,Y,Z$ be Banach spaces where $Z\hookleftarrow Y\hookleftarrow X$ with continuous injections, the second being compact.
Then the following embeddings are compact:
\[
W:=\{ \chi\in L^2(0,T;X), \ \partial_t\chi\in L^2(0,T;Z) \} \hookrightarrow L^2(0,T;Y).
\]
\end{lemma}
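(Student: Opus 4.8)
The plan is to prove the stated compactness as a sequential statement: I would show that every sequence $\{\chi_n\}$ bounded in $W$ admits a subsequence converging strongly in $L^2(0,T;Y)$. The engine of the argument is an Ehrling-type interpolation inequality that exploits precisely the asymmetry in the hypotheses, namely that $X\hookrightarrow Y$ is compact while $Y\hookrightarrow Z$ is only continuous: for every $\eta>0$ there is a constant $C_\eta>0$ such that $\|v\|_Y\le \eta\|v\|_X+C_\eta\|v\|_Z$ for all $v\in X$. First I would prove this by contradiction. If it failed for some $\eta_0$, one extracts $v_k$ with $\|v_k\|_Y=1$, with $\{\|v_k\|_X\}$ bounded, and with $\|v_k\|_Z\to 0$; compactness of $X\hookrightarrow Y$ then yields a subsequence with $v_k\to v$ in $Y$, forcing $\|v\|_Y=1$, while the continuity $Y\hookrightarrow Z$ together with $v_k\to 0$ in $Z$ forces $v=0$, a contradiction.

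Next I would extract weak limits. Since $\{\chi_n\}$ is bounded in $L^2(0,T;X)$ and $\{\partial_t\chi_n\}$ is bounded in $L^2(0,T;Z)$, I pass to a subsequence with $\chi_n\rightharpoonup\chi$ in $L^2(0,T;X)$ and $\partial_t\chi_n\rightharpoonup\partial_t\chi$ in $L^2(0,T;Z)$ (the weak limit of the derivatives being the derivative of the weak limit). Writing $w_n:=\chi_n-\chi$, the problem reduces to showing $w_n\to 0$ strongly in $L^2(0,T;Y)$, where now $w_n\rightharpoonup 0$ in $L^2(0,T;X)$ and $\{\partial_t w_n\}$ is still bounded in $L^2(0,T;Z)$. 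Integrating the Ehrling inequality in time gives $\|w_n\|_{L^2(0,T;Y)}\le \eta\|w_n\|_{L^2(0,T;X)}+C_\eta\|w_n\|_{L^2(0,T;Z)}\le \eta M+C_\eta\|w_n\|_{L^2(0,T;Z)}$, where $M:=\sup_n\|w_n\|_{L^2(0,T;X)}<\infty$. Since $\eta>0$ is arbitrary, everything reduces to the single claim $\|w_n\|_{L^2(0,T;Z)}\to 0$.

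Establishing this strong convergence in $Z$ is where I expect the real difficulty. The obstruction is that the bound on $w_n$ in $X$ holds only in the $L^2$-in-time sense, so there is no uniform pointwise-in-$t$ control in $X$ on which to apply the compact embedding $X\hookrightarrow\hookrightarrow Z$ (obtained by composing $X\hookrightarrow\hookrightarrow Y\hookrightarrow Z$) directly. The resolution is to trade the time regularity furnished by the $\partial_t$ bound against the spatial compactness. Concretely, I would introduce the Steklov time-average $w_n^h(t):=\tfrac{1}{h}\int_t^{t+h}w_n(s)\,ds$ (suitably handled near the endpoint $T$) and use two facts: for fixed $h$ the family $\{w_n^h(t)\}_n$ is bounded in $X$ uniformly in $t$ by Cauchy--Schwarz, hence relatively compact in $Z$; and $\|w_n-w_n^h\|_{L^2(0,T;Z)}\le C h^{1/2}\|\partial_t w_n\|_{L^2(0,T;Z)}$ is small uniformly in $n$. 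Combining the equicontinuity estimate $\|w_n(t)-w_n(s)\|_Z\le |t-s|^{1/2}\|\partial_t w_n\|_{L^2(0,T;Z)}$ with an Arzel\`a--Ascoli and diagonal argument on the mollified sequences produces a subsequence converging in $L^2(0,T;Z)$, whose limit must be $0$ by the weak convergence $w_n\rightharpoonup 0$. Feeding $\|w_n\|_{L^2(0,T;Z)}\to 0$ back into the integrated Ehrling inequality and letting $\eta\to 0$ then completes the proof.
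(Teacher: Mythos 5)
The paper does not actually prove Lemma \ref{t:Lions}: it is quoted as the classical Aubin--Lions lemma, with the proof deferred to the cited references (Lions, Tanabe, Zheng). Your argument is, in substance, the classical proof found in those sources: Ehrling's interpolation inequality $\|v\|_Y\le\eta\|v\|_X+C_\eta\|v\|_Z$ established by the standard normalization/contradiction argument, the reduction via this inequality (integrated in time) of the whole problem to strong convergence in $L^2(0,T;Z)$, and finally that convergence obtained by trading the $L^2$-in-time bound on $\partial_t\chi_n$ for pointwise-in-time compactness through Steklov averages together with an Arzel\`a--Ascoli and diagonal extraction. All three steps are sound, including your correct identification of the only delicate point (no pointwise-in-$t$ bound in $X$) and its standard remedy. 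One caveat is worth recording: your extraction of weak limits $\chi_n\rightharpoonup\chi$ in $L^2(0,T;X)$ uses weak sequential compactness of bounded sets, which requires $X$ to be reflexive (so that $L^2(0,T;X)$ is reflexive), whereas the lemma as stated in the paper says only ``Banach spaces.'' The classical statements in the cited references do assume reflexivity, and in the paper's application the spaces $X=\mathbb{V}^1$, $Y=Z=\mathbb{X}^2$ are Hilbert, so nothing is lost; alternatively, you can remove the reflexivity requirement from your own proof by skipping the weak-limit subtraction entirely and instead running the Steklov/Arzel\`a--Ascoli argument to produce a subsequence of $\{\chi_n\}$ that is Cauchy in $L^2(0,T;Z)$, after which Ehrling's inequality makes it Cauchy, hence convergent, in $L^2(0,T;Y)$.
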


%%%%%%%%%%%%%%%%%%%%%%%%%%%%%%%%%%%%%%%%%%%%%%%%%%%%%%%%%%%%%%%%%%%
%%%%%%%%%%%%%%%%%%%%%%%%%%%%%%%%%%%%%%%%%%%%%%%%%%%%%%%%%%%%%%%%%%%
\section*{Acknowledgments}
%%%%%%%%%%%%%%%%%%%%%%%%%%%%%%%%%%%%%%%%%%%%%%%%%%%%%%%%%%%%%%%%%%%
%%%%%%%%%%%%%%%%%%%%%%%%%%%%%%%%%%%%%%%%%%%%%%%%%%%%%%%%%%%%%%%%%%%

The author is grateful to the anonymous referees for their careful reading of the manuscript and for their helpful comments and suggestions.

%%%%%%%%%%%%%%%%%%%%%%%%%%%%%%%%%%%%%%%%%%%%%%%%%%%%%%%%%%%%%%%%%%%
%%%%%%%%%%%%%%%%%%%%%%%%%%%%%%%%%%%%%%%%%%%%%%%%%%%%%%%%%%%%%%%%%%%
% END MATTER
%%%%%%%%%%%%%%%%%%%%%%%%%%%%%%%%%%%%%%%%%%%%%%%%%%%%%%%%%%%%%%%%%%%
%%%%%%%%%%%%%%%%%%%%%%%%%%%%%%%%%%%%%%%%%%%%%%%%%%%%%%%%%%%%%%%%%%%

%\bibliographystyle{amsplain}
%\bibliography{/Users/jshomber/Dropbox/Library/REFERENCES}
\providecommand{\bysame}{\leavevmode\hbox to3em{\hrulefill}\thinspace}
\providecommand{\MR}{\relax\ifhmode\unskip\space\fi MR }
% \MRhref is called by the amsart/book/proc definition of \MR.
\providecommand{\MRhref}[2]{%
  \href{http://www.ams.org/mathscinet-getitem?mr=#1}{#2}
}
\providecommand{\href}[2]{#2}

\end{document}